
\documentclass[review]{article}

%\journal{Journal of Graph Theory}

%%%%%%%%%%%%%%%%%%%%%%%
%% Packages
%%%%%%%%%%%%%%%%%%%%%%%
\usepackage{lineno,hyperref}
\usepackage{amssymb, amsthm, amsmath}
\usepackage{mathtools}
\usepackage[margin=0.9in]{geometry}
\usepackage{fancyhdr}
%%%%%%%%%%%%%%%%%%%%%%%

%%%%%%%%%%%%%%%%%%%%%%%
%% Elsevier bibliography styles
%%%%%%%%%%%%%%%%%%%%%%%
%% To change the style, put a % in front of the second line of the current style and
%% remove the % from the second line of the style you would like to use.
%%%%%%%%%%%%%%%%%%%%%%%

%% Numbered
%\bibliographystyle{model1-num-names}

%% Numbered without titles
%\bibliographystyle{model1a-num-names}

%% Harvard
%\bibliographystyle{model2-names.bst}\biboptions{authoryear}

%% Vancouver numbered
%\usepackage{numcompress}\bibliographystyle{model3-num-names}

%% Vancouver name/year
%\usepackage{numcompress}\bibliographystyle{model4-names}\biboptions{authoryear}

%% APA style
%\bibliographystyle{model5-names}\biboptions{authoryear}

%% AMA style
%\usepackage{numcompress}\bibliographystyle{model6-num-names}

%% `Elsevier LaTeX' style
%\bibliographystyle{elsarticle-num}
%%%%%%%%%%%%%%%%%%%%%%%
%%Environments
%%%%%%%%%%%%%%%%%%%%%%%
\theoremstyle{plain}
\newtheorem{corollary}{Corollary}[section]
\newtheorem{theorem}{Theorem}[section]
\newtheorem{lemma}{Lemma}[section]

\theoremstyle{definition}
\newtheorem{definition}{Definition}[section]
%%%%%%%%%%%%%%%%%%%%%%%

\DeclarePairedDelimiter\floor{\lfloor}{\rfloor}

%%%%%%%%%%%%%%%%%%%%%%%
%% Author information
%%%%%%%%%%%%%%%%%%%%%%%
\title{The Integer-Magic Spectra of Trees}
\author{Alvaro Carbonero$^a$ and Dylan Obata$^b$ \\ $^{a, b}$University of Nevada, Las Vegas \\ $^a$carboa1@unlv.nevada.edu \ \ \ \ $^b$obatad1@unlv.nevada.edu}
\date{\today}
%%%%%%%%%%%%%%%%%%%%%%%

%%%%%%%%%%%%%%%%%%%%%%%
\begin{document}
%%%%%%%%%%%%%%%%%%%%%%%
\setlength\parindent{24pt}

\maketitle

\begin{abstract}
For any positive integer $h$, a graph $G=(V,E)$ is said to be $h$-magic if there exists a labeling $l:E(G)\to \mathbb{Z}_h -\{0\}  $ such that the induced vertex set labeling $\ l^+ : V(G) \to \mathbb{Z}_h \ $ defined by
$$
l^+ (v)=\sum_{uv \in E(G)} \ l(uv)
$$
is a constant map. The integer-magic spectrum of a graph $G$, denoted by $IM(G)$, is the set of all $h \in \mathbb{N} $ for which $G$ is $h$-magic. So far, only the integer-magic spectra of trees of diameter at most five have been determined. In this paper, we determine the integer-magic spectra of trees of diameter six and higher.
\end{abstract}

\textbf{Keywords: } integer-magic spectrum, tree diameter, magic labeling. 

\textbf{MSC:} 05C78

\section{Introduction}

In this paper all graphs are connected, finite, simple, and undirected. For graph theory notations and terminology not described in this paper, we refer the readers to \cite{kn:c-z}. Let $G=(V, E)$ be a graph. For an abelian group $A, $ written additively, any mapping $\ l: E(G)\to A-\{ 0 \}  $ is called a \textit{labeling}. Given an edge labeling $l$, we can define a labeling of the vertices $l^+:V(G) \rightarrow A$ such that
$$
l^+(v) = \sum_{uv\in E(G)} l(uv);
$$
that is, $l^+(v)$ is the sum of the labels of all the edges incident with $v$. If there is a labeling $l$ such that $l^+$ is constant, then we say $G$ is an $A$-magic graph and we call $l$ an $A$-magic labeling. If $G$ is not $A$-magic for every abelian group $A$, then we say $G$ is \textit{non-magic}. On the other hand, if $G$ is $A$-magic for every abelian group $A$, we say that $G$ is \textit{fully magic}.

For convenience and to follow convention, we will use the notation $h$-magic, where $h>1$, to indicate $\mathbb{Z}_h$-magic. Similarly, if $l$ is a $\mathbb{Z}_h$-magic labeling, we say $l$ is an $h$-magic labeling. It is pertinent to mention that there is another definition of an $h$-magic graph \cite{other}; this one is a different generalization of magic graphs and so it does not relate to the topics covered in this paper.

\begin{definition}
For a graph G, let the integer-magic spectrum of $G$, denoted by $IM(G)$, be the set of all integers $h$ for which $G$ is $h$-magic.
\end{definition}

The integer-magic spectra of trees of diameter at most five have already been determined \cite{diam4, kn:l-si-s}. In this paper, we generalize these previous results and characterize the integer-magic spectra of trees of arbitrary diameter. For a compilation of results regarding the integer-magic spectra of other families of graphs, we refer the reader to \cite{kn:gallian}.

The term \textit{pendant vertex} will refer to vertices with degree one. Similarly, the term \textit{pendant edge} will refer to edges which are incident to pendant vertices. Note that a pendant vertex will have the same label as the pendant edge incident to it, so in order for $l^+$ to be constant, all pendent edges must have the same label. We will reserve the variable $x$ for the label that every pendant edge must have.  Consequently, for $l$ to be an $h$-magic labeling, we need $l^+(v) \equiv x \mod{h}$ for every vertex $v$.

\begin{figure}[!h]
\begin{center}
\includegraphics[scale=0.7]{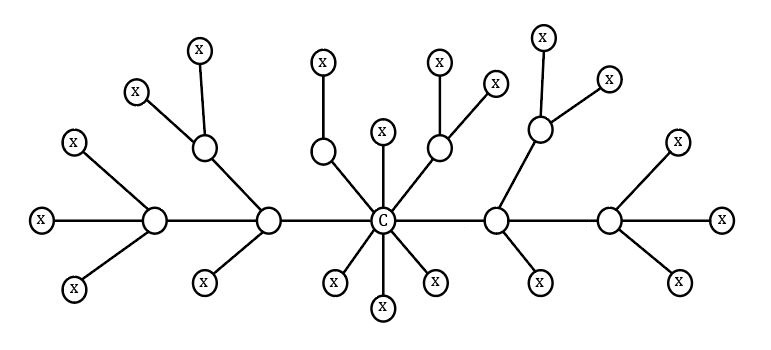}
\end{center}
\caption{A typical tree of diameter $6$.} 
\end{figure}

\section{Notation for Trees}

To determine the integer-magic spectrum of any tree, we need to develop notation that applies to trees of any diameter. To achieve this, we use the fact that trees of even diameter have as a center a single vertex, and that trees of odd diameter have as a center the path $P_2$. From now on, $c$ will denote the vertex at the center of a tree of even diameter; similarly, $c_1$ and $c_2$ will denote the vertices at the center of a tree of odd diameter. Also, to simplify the statement of our theorems, we will use $T_n$ to refer to a tree of diameter $n$. 

We start by describing vertices in terms of their distance from the center.

\begin{definition}
For a tree $T_{2k}$ and an integer $i\geq 0$, let 
$$
D_i := \{v \in V(T_{2k}) \text{ : } d(v, c) = i\}.
$$
\end{definition}

Trees of diameter $2k+1$ have the added complication of having two vertices at the center. For the sake of having formal definitions, we need to distinguish those vertices that are closer to $c_1$ from those that are closer to $c_2$. With this objective, we define the function $\delta: V(T_{2k+1}) \rightarrow \mathbb{N} \cup \{0 \}$ where $\delta (v) = \text{min} \{ d(v, c_1), d(v, c_2)\}$. 

\begin{definition}
For a tree $T_{2k+1}$ and an integer $i \geq 0$, let 
$$
D_i := \{v \in V(T_{2k+1}) \text{ : } \delta (v) = i\}.
$$
\end{definition}

We are abusing notation by using $D_i$ for both even and odd diameter trees. However, for our purposes, their definitions are equivalent. In both cases, $D_i$ is the collection of vertices at distance $i$ from the center. See Figure 2 for an example.

\begin{figure}[!h]
\begin{center}
\includegraphics[scale=0.55]{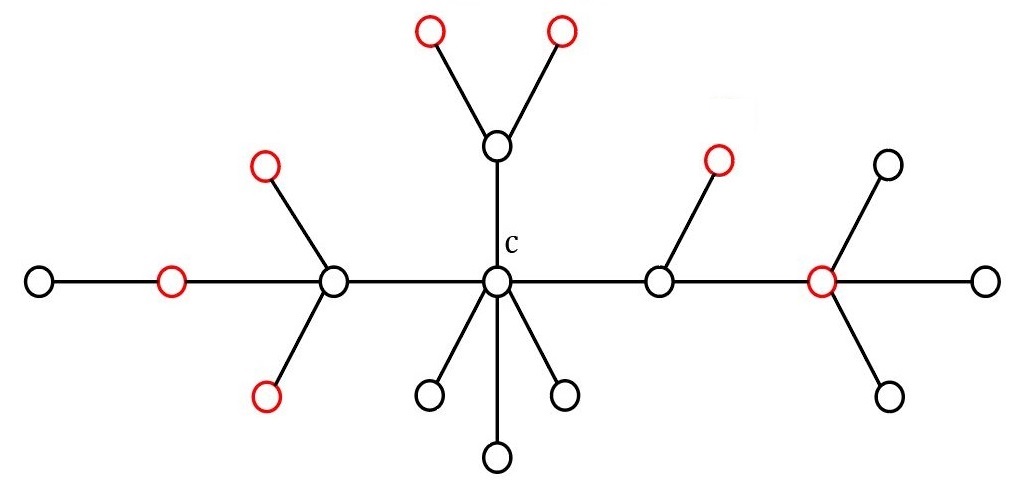}

\end{center}
\caption{A tree $T_6$ with the members of $D_2$ highlighted in red.} 
\end{figure}

Note that for both $2k$ and $2k+1$ diameters, $D_d = \emptyset$ if $d>k$; otherwise, the diameter would be bigger. Furthermore, if the tree has diameter $2k$, then by definition there must be at least two pendant vertices that are distance $k$ away from $c$; similarly, for $T_{2k+1}$, $c_1$ and $c_2$ each must have at least one pendant vertex that is $k$ away. Thus, for both odd and even diameters, if $1 \leq i \leq k$, then $|D_i| \geq 2$. Additionally, even diameters have $D_0 = \{c\}$, and odd diameters have $D_0 = \{c_1, c_2\}.$ Finally, we note that, for any $n$, if $uv \in E(T_n)$ and $u\in D_m$, then either $v\in D_{m+1}$ or $v\in D_{m-1}$ because $u$ and $v$ are adjacent, so the difference in their distance from the center is exactly one.

To discuss $h$-magic labelings for trees, we need a way to talk about the subgraph that ``branches out'' of a vertex. With this objective in mind, we define the branches of $T_n$. To simplify the statement of these definitions, we will use the term $P(u, v)$, where $u, v\in V(T_n)$, to refer to the vertex set of the unique path in $T_n$ from $u$ to $v$.

\begin{definition}
For a tree $T_{2k}$ and $u \in V(T_{2k})\setminus \{c\}$, let the \textit{branch of} $u$, or $B^u$, be the induced subgraph of $T_{2k}$ with vertex set: 
$$
\{v \in V(T_{2k}) \text{ : }d(c, u) \leq d(c, v) \text{ and } \forall w \in P(u, v) \text{ } d(c, u) \leq d(c, w) \}.
$$
\noindent Furthermore, define $B^c = T_{2k}$.
\end{definition}

Informally, $B^u$ is the subgraph of $T_{2k}$ that extends from $u$ and outwards to the pendant vertices. We now make an equivalent definition for trees of odd diameter. 

\begin{definition}
For a tree of $T_{2k+1}$ and $u \in V(T_{2k + 1})\setminus\{c_1, c_2\}$, let the \textit{branch of} $u$, or $B^u$, be the induced subgraph of $T_{2k+1}$ with vertex set:
$$
\{v \in V(T_{2k+1}) \text{ : } \delta (u) \leq \delta(v) \text{ and } \forall w \in P(u, v) \text{ } \delta (u) \leq \delta (w)\}.
$$
\noindent Furthermore, let $B^{c_1}$ be the induced subgraph with vertex set:
$$
\{v \in V(T_{2k+1}) \text{ : } \delta (c_1) \leq \delta(v) \text{ and } c_2 \not\in P(v, c_1)\}.
$$
The definition of $B^{c_2}$ follows similarly.
\end{definition}

\begin{figure}[!h]
\begin{center}
\includegraphics[scale=0.5]{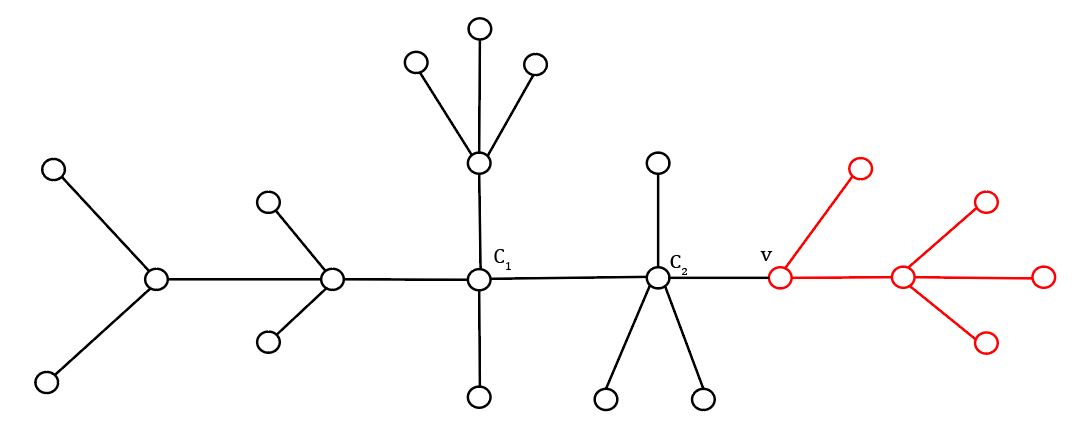}

\end{center}
\caption{A tree $T_7$ with $B^v$ highlighted in red.} 
\end{figure}

If $v \in V(B^u)$ and $v \not = u$, then we say $v$ \textit{extends} from $u$. An important observation to make about these branches is that if $uv\in E(T_n)$, with $u\in D_m$ and $v\in D_{m+1}$, then $B^v$ is a subgraph of $B^u$ since all the vertices that extend from $v$ also extend from $u$. In general, the branch of any vertex that extends from $u$ is a subgraph of $B^u$. We now create a term that combines the objectives behind $D_m$ and $B^u$. 

\begin{definition}
For a tree $T_n$, a vertex $u \in V(T_n)$ and an integer $m\geq 0$, let 
$$
D^u_m := D_m \cap V(B^u).
$$
\end{definition}

Informally, $D^u_m$ is the collection of vertices at distance $i$ from the center that also extend from $u$. An important observation is that if $u\in D_m$ and $u$ is not a pendant vertex, then all the vertices in $D^u_{m+1}$ are adjacent to $u$ because they extend from $u$ and are distance one away. On the other hand, if $u$ is a pendant vertex, then it must be that $D^u_{m+1} = \emptyset$ because nothing would be able to extend from $u$. Finally, if $u\in D_m$, then $D_m^u = \{u\}$ because all other vertices in $V(B^u)$ must be further away from the center than $u$. 

Having the same notation for both even and odd diameter trees allows us to make reference to trees in general without having to specify the parity of its diameter; when the parity of the diameter of the tree is important, we will separate the cases. We now prove a result that makes no reference to $h$-magic labelings; it is a general fact about trees. A special case of the result will be extensively used in the next section.

\begin{theorem}
Let $T_n$ be a tree, let $p$ and $m$ be integers where $p > m$, and let $u$ be a vertex in $D_m$ such that the set $D^u_p$ is nonempty. If $D^u_p = \{w_1, w_2, ..., w_{\alpha} \}$, then, for every integer $d$ such that $d>p$,
\begin{equation} \label{genthm}
\sum_{i=1}^{\alpha}|D^{w_i}_d| = |D^u_d|.    
\end{equation}

\end{theorem}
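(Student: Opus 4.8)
The plan is to prove that the sets $D^{w_1}_d,\dots,D^{w_\alpha}_d$ partition $D^u_d$; then \eqref{genthm} is merely the statement that the cardinality of a set equals the sum of the cardinalities of the blocks of a partition of it. One inclusion is immediate from the observations in the excerpt: each $w_i$ lies in $D^u_p=D_p\cap V(B^u)$ and, since $p>m$, is distinct from $u$, hence extends from $u$; so $B^{w_i}$ is a subgraph of $B^u$ and therefore $D^{w_i}_d=D_d\cap V(B^{w_i})\subseteq D_d\cap V(B^u)=D^u_d$. The real work is thus to show that every $v\in D^u_d$ belongs to exactly one $D^{w_i}_d$.

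The tool I would isolate first is a clean description of branch membership: for any tree $T_n$, any non-central vertex $w$, and any vertex $v$, one has $v\in V(B^w)$ if and only if $w$ lies on the unique path $P(c,v)$ from $v$ to the center — in the odd-diameter case, on $P(c_i,v)$, where $c_i$ is the center vertex on whose side $v$ lies. For the ``if'' direction, note that along $P(c,v)$ the distance to the center increases by exactly one at each step (edges join consecutive $D$-levels), so if $w\in P(c,v)$ then the sub-path from $w$ to $v$ never drops below distance $d(c,w)$, which is precisely the defining condition for $v\in V(B^w)$. For ``only if'', suppose $v\in V(B^w)$ but $w\notin P(c,v)$, and let $y$ be the vertex at which $P(c,w)$ and $P(c,v)$ diverge; then $y\in P(w,v)$ with $d(c,y)<d(c,w)$, contradicting the defining property of $B^w$. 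In the odd case one first observes that $w$ must lie on the same side as $v$, since any path crossing the edge $c_1c_2$ meets a center vertex, which has $\delta=0<\delta(w)$; after that, each side is a tree rooted at $c_1$ or at $c_2$ with $\delta$ playing the role of distance-to-root, and the single-center argument applies verbatim.

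Now take any $v\in D^u_d$. By the characterization, $u\in P(c,v)$. Since the distances along $P(c,v)$ run through $0,1,\dots,d$ one at a time and $m<p<d$, the path $P(c,v)$ contains exactly one vertex $w$ with $d(c,w)=p$; moreover $w$ lies on the portion of $P(c,v)$ between $u$ and $v$, so $w\in V(B^u)$ and hence $w\in D^u_p$, say $w=w_j$. Applying the characterization in the other direction, $w_j\in P(c,v)$ yields $v\in V(B^{w_j})$, so $v\in D^{w_j}_d$; and $v$ lies in no other $D^{w_i}_d$, because membership in $V(B^{w_i})$ would force $w_i\in P(c,v)$, while $P(c,v)$ has a unique vertex at distance $p$. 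Hence the sets $D^{w_1}_d,\dots,D^{w_\alpha}_d$ are pairwise disjoint with union $D^u_d$, and summing cardinalities gives \eqref{genthm}.

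The step I expect to be the crux is the branch-membership characterization itself, and in particular keeping the odd-diameter case honest: the branches $B^{c_1}$ and $B^{c_2}$ carry their own ad hoc definition, so one must check that ``extending from $w$'' respects the partition of $T_{2k+1}$ into its $c_1$-side and $c_2$-side before the single-center reasoning can be reused. Everything after that — the discrete intermediate-value step locating the distance-$p$ vertex on $P(c,v)$ and the disjointness — is routine.
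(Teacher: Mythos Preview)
Your proof is correct and follows essentially the same strategy as the paper: both establish that the sets $D^{w_1}_d,\dots,D^{w_\alpha}_d$ partition $D^u_d$, with the easy inclusion via $B^{w_i}\subseteq B^u$ and the reverse inclusion via an intermediate-level vertex on the path from $v$ toward the center. The only difference is one of rigor: the paper simply asserts that the branches ``do not overlap'' and that ``the path from $u$ to $v$ must go through $w_i$ for some $i$'', whereas you supply an explicit branch-membership characterization ($v\in V(B^w)$ iff $w\in P(c,v)$) and handle the odd-diameter case carefully---so your version fills in details the paper leaves to the reader.
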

\begin{proof}
Set $k = \floor{\frac{n}{2}}$. If $d>k$, the result is trivial since all the sets in (\ref{genthm}) are empty. Thus, assume that $d \leq k$.

Observe that the sets $D^{w_1}_d, D^{w_2}_d, ...,$ and $D^{w_\alpha}_d$ are all pairwise disjoint since their corresponding branches in $T_n$ do not overlap. Thus, 
$$
|\bigcup_{1\leq i \leq \alpha} D_d^{w_i}| = \sum_{i=1}^{\alpha}|D^{w_i}_d|.
$$
We finish the proof by showing that $\bigcup_{1\leq i \leq \alpha} D_d^{w_i} = D^u_d$. Let $v\in D_d^{w_i}$ for some $i$ where $1\leq i \leq \alpha$. By definition, $v\in D_d$ and $v\in V(B^{w_i})$. Since $w_i$ extends from $u$, $B^{w_i}$ is a subgraph of $B^u$. Thus, $v \in V(B^u)$. It follows that $v \in D_d^{u}$. Now let $v \in D^u_d$. Then $v \in V(B^u)$. Since $v$ is further away from the center than any of the vertices in $D^u_p$, the path from $u$ to $v$ must go through $w_i$ for some $i$. That is, $v$ extends from $w_i$, and so $v \in V(B^{w_i})$. Since $v \in D_d$, $v \in D^{w_i}_d$. 
\end{proof}

In the next section, we will use the special case of $p=m+1$ since, as discussed before, $D^u_{m+1}$ consists of vertices adjacent to $u$. For convenience and future reference, we state it explicitly.

\begin{corollary} \label{lem1}
For a tree $T_n$, let $u$ be a vertex in $D_m$ such that $u$ is not a pendant vertex. If $D^u_{m+1} = \{w_1, w_2, ..., w_{\alpha}\}$, then, for every $d$ such that $d>m+1$,
$$
\sum_{i=1}^{\alpha}|D^{w_i}_d| = |D^u_d|.
$$
\end{corollary}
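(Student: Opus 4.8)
The plan is to obtain this as the $p=m+1$ instance of the theorem just proved, so essentially all that has to be done is to specialize that theorem and confirm its hypotheses hold. Since $u\in D_m$ and $p:=m+1>m$, the only condition that is not automatic is that $D^u_p=D^u_{m+1}$ be nonempty; granting that, the conclusion $\sum_{i=1}^{\alpha}|D^{w_i}_d|=|D^u_d|$ for all $d>m+1$ is exactly \eqref{genthm}.

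To verify $D^u_{m+1}\neq\emptyset$, I would reason as follows. Since $u$ is not a pendant vertex, $\deg(u)\geq 2$. In a tree there is a unique path from $u$ to the center, so when $m\geq 1$ exactly one neighbor of $u$ lies in $D_{m-1}$; because adjacency changes the distance-from-center index by exactly one, every remaining neighbor of $u$ then lies in $D_{m+1}$. As $u$ has at least one such neighbor $w$, and $w$, being adjacent to $u$, extends from $u$, we get $w\in D_{m+1}\cap V(B^u)=D^u_{m+1}$, so this set is nonempty. The boundary case $m=0$, in which $u$ is a center vertex, is handled straight from the definitions of $B^c$ (respectively $B^{c_1}$ and $B^{c_2}$): a non-pendant center vertex of a tree of positive diameter has a neighbor at distance one lying in its own branch, so again $D^u_{m+1}\neq\emptyset$.

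With nonemptiness established, writing $D^u_{m+1}=\{w_1,\dots,w_\alpha\}$ and applying the preceding theorem with $p=m+1$ gives the displayed identity for every $d>m+1$, which is the assertion. I do not expect any real obstacle: this corollary is merely the special case that will be used repeatedly in the following section, isolated here for convenience, and the one point worth making explicit is the nonemptiness check above, since it is what licenses the application of the quoted theorem.
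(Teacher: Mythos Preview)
Your proposal is correct and mirrors the paper's approach: the paper simply records this corollary as the $p=m+1$ special case of the preceding theorem, without further argument. Your explicit verification that $D^u_{m+1}\neq\emptyset$ (including the $m=0$ boundary case) is a welcome addition that the paper leaves implicit via its earlier remark that $D^u_{m+1}$ consists of the neighbors of $u$ further from the center and is empty precisely when $u$ is pendant.
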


\section{The Forced Labeling $f$}

In an $h$-magic labeling, every pendant edge is forced to have the same label. We will show that in trees this notion of a ``forced'' labeling extends beyond pendant edges. 

We start by showing a method for labeling edges that makes $l^+$ constant with the exception of the vertices at the center. The method will depend on the labeling assigned to the pendant edges, though it will not guarantee that the edge labeling does not include 0 in its range.

In Figure 4, we see such method applied. We can label the edges closer to the center according to the label of the edges further away in such a way that their common vertex gets the constant label. For instance, for $v$, the sum of the labels of the edges further away is $-3x$. Thus, the label of the edge closer to the center needs to be $4x$ so that $l^+(v)=-3x+4x = x$. By using the notation we developed in the previous section, we can define a function that calculates the number that multiplies the $x$ for each edge.

\begin{figure}[!h]
\begin{center}
\includegraphics[scale=0.6]{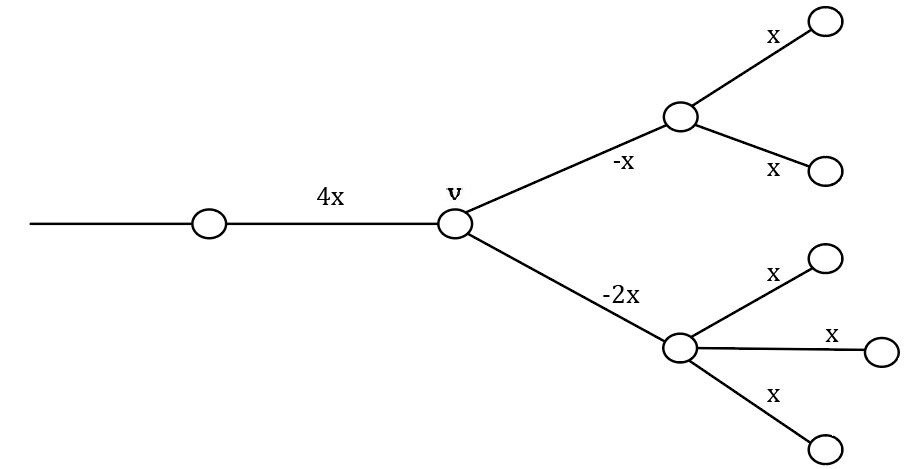}
\end{center}
\caption{Edges labelled such that $l^+$ is constant.} 
\end{figure}

\begin{definition}
For a tree $T_{2k}$, define $f: E(T_{2k}) \rightarrow \mathbb{Z}$ such that if $uv \in E(T_{2k})$, $u\in D_m$ and $v \in D_{m-1}$, then
$$
f(uv) := (-1)^{k-m+1}\sum_{i = 1}^{k-m+1}(-1)^i|D^{u}_{k-i+1}|.
$$
Furthermore, for a tree $T_{2k+1}$, define $f: E(T_{2k+1}) \rightarrow \mathbb{Z}$ such that if $uv \in E(T_{2k+1})\setminus \{ c_1c_2\}$, $u\in D_m$ and $v\in D_{m-1}$, then
$$
f(uv) := (-1)^{k-m+1}\sum_{i = 1}^{k-m+1}(-1)^i|D^{u}_{k-i+1}|.
$$
As for $c_1c_2$, let
$$
f(c_1c_2) := (-1)^{k+1}\sum_{i=1}^{k+1}(-1)^i|D^{c_1}_{k-i+1}|.
$$
\end{definition}

The definition of $f$ for odd diameter trees is more complicated because there is an extra edge at the center to consider. For edges other than that one, $f$ is the same for even and odd diameters. 

Note that, for even diameters, $f$ cannot guarantee that $l^+(c)=x$. A similar problem happens for odd diameters. Although there is an edge at the center, this edge can only guarantee the constant label for $c_1$ or $c_2$, but not always both. We arbitrarily chose to secure the constant label for $c_1$. Another problem with this method is the possibility that $f(e) = 0$ for some edge $e$. If that happens, then this method cannot be applied to construct an $h$-magic labeling since $0$ cannot be assigned to an edge. These problems, however, do not happen in $h$-magic graphs. The proof of this fact follows

\begin{lemma}\label{mainlem}
Let $T_{n}$ be an $h$-magic tree where $l$ is an $h$-magic labeling. If the pendant edges have label $x$, then for every edge $e$, 
$$
l(e) \equiv xf(e) \mod{h}.
$$
\end{lemma}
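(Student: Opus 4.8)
The plan is to prove the congruence $l(e)\equiv xf(e)\pmod h$ by induction on the distance of the ``outer'' endpoint of $e$ from the center, working from the pendant edges inward. Write $e=uv$ with $u\in D_m$, $v\in D_{m-1}$, and set $k=\lfloor n/2\rfloor$. The base case is $m=k$: then $u$ is forced to be a pendant vertex (nothing can extend further), so $l(e)=x$ by the pendant-edge observation, and one checks directly from the definition that $f(uv)=(-1)^{k-m+1}\sum_{i=1}^{1}(-1)^i|D^u_{k-i+1}|=(-1)^1\cdot(-1)^1|D^u_k|=|D^u_k|=|\{u\}|=1$, so $xf(e)=x=l(e)$. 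More generally the base case should be ``$u$ is a pendant vertex,'' where again $l(e)=x$ and the sum defining $f(uv)$ should collapse to $1$ because $D^u_d=\emptyset$ for $d>m$ and $D^u_m=\{u\}$.

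For the inductive step, take $e=uv$ with $u\in D_m$ a non-pendant vertex, $v\in D_{m-1}$, and assume the claim holds for every edge whose outer endpoint lies in $D_{m+1}$. Let $D^u_{m+1}=\{w_1,\dots,w_\alpha\}$; these are exactly the neighbors of $u$ other than $v$ (they extend from $u$ and are at distance one). The key identity is the magic condition at $u$:
$$
x\equiv l^+(u)=l(uv)+\sum_{j=1}^{\alpha}l(uw_j)\pmod h,
$$
so $l(uv)\equiv x-\sum_{j=1}^{\alpha}l(uw_j)$. By the inductive hypothesis, $l(uw_j)\equiv xf(uw_j)$, hence $l(uv)\equiv x\bigl(1-\sum_{j=1}^{\alpha}f(uw_j)\bigr)$. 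It therefore remains to prove the purely combinatorial identity
$$
f(uv)=1-\sum_{j=1}^{\alpha}f(uw_j),
$$
i.e. $f(uv)+\sum_{j=1}^{\alpha}f(uw_j)=1$.

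The main obstacle is verifying this last identity from the definition of $f$, and this is where Corollary~\ref{lem1} enters. Each $w_j\in D_{m+1}$, so $f(uw_j)=(-1)^{k-m}\sum_{i=1}^{k-m}(-1)^i|D^{w_j}_{k-i+1}|$. Summing over $j$ and applying Corollary~\ref{lem1} (with the role of $u$ there played by our $u$, and $d=k-i+1>m+1$ for $1\le i\le k-m$) gives $\sum_{j}|D^{w_j}_{k-i+1}|=|D^u_{k-i+1}|$, so
$$
\sum_{j=1}^{\alpha}f(uw_j)=(-1)^{k-m}\sum_{i=1}^{k-m}(-1)^i|D^u_{k-i+1}|.
$$
On the other side, $f(uv)=(-1)^{k-m+1}\sum_{i=1}^{k-m+1}(-1)^i|D^u_{k-i+1}|$; peeling off the $i=k-m+1$ term (which contributes $(-1)^{k-m+1}(-1)^{k-m+1}|D^u_m|=|D^u_m|=|\{u\}|=1$) and reindexing shows $f(uv)=1-(-1)^{k-m}\sum_{i=1}^{k-m}(-1)^i|D^u_{k-i+1}|=1-\sum_{j}f(uw_j)$, as required. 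Finally, the odd-diameter case needs one extra argument for the central edge $c_1c_2$: here we use the magic condition at $c_1$, namely $x\equiv l^+(c_1)=l(c_1c_2)+\sum_{w\in D^{c_1}_1}l(c_1w)$, together with the inductive hypothesis on the edges $c_1w$ (whose outer endpoints lie in $D_1$) and Corollary~\ref{lem1} applied at $c_1$; the same telescoping, now running the sum up to $i=k+1$ and using $|D^{c_1}_0|=|\{c_1\}|=1$, yields $f(c_1c_2)=1-\sum_{w\in D^{c_1}_1}f(c_1w)\equiv l(c_1c_2)/x$. I expect the bookkeeping with the signs $(-1)^{k-m+1}$ and the index shift to be the only delicate point; everything else is a direct unwinding of definitions plus one application of the corollary.
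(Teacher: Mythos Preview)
Your proof is correct and follows essentially the same route as the paper: backwards induction on the level $m$ of the outer endpoint, using the magic condition $l^+(u)\equiv x$ to express $l(uv)$ in terms of the already-handled edges $uw_j$, and then invoking Corollary~\ref{lem1} to collapse $\sum_j |D^{w_j}_d|$ into $|D^u_d|$; the separate treatment of pendant $u$ and of the central edge $c_1c_2$ via $l^+(c_1)$ also matches the paper. One tiny quibble: your claim ``$d=k-i+1>m+1$ for $1\le i\le k-m$'' is off by one at $i=k-m$ (there $d=m+1$), but the identity $\sum_j|D^{w_j}_{m+1}|=\alpha=|D^u_{m+1}|$ holds trivially since $D^{w_j}_{m+1}=\{w_j\}$, so the argument goes through unchanged.
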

\begin{proof}
We first prove the statement for $n=2k$. Define
$$
\hat{D}_m = \{ uv \in E(T_{2k}) \text{ : } u \in D_m \text{ and } v \in D_{m-1}\}
$$
for $m$ such that $1 \leq m \leq k$. This partitions $E(T_{2k})$ into $k$ sets. We will prove the statement by using the principle of backwards induction on $m$.

For the base case, let $uv \in \hat{D}_k$ where $u \in D_k$ and $v \in D_{k-1}$. Since $u\in D_k$, then $uv$ is a pendant edge. By definition, $l(uv) = x$, and so
\begin{align*}
    xf(uv) & = x(-1)^{k-k+1} \sum_{i = 1}^{k-k+1}(-1)^i|D^u_{k-i+1}|\\
           & = x|D^u_{k}| \\
           & = x\\
           & = l(uv) \mod{h}.
\end{align*}
We used the fact that for all $u\in D_i$, $|D^u_i| = 1$.

Now assume that the statement is true for all edges in $\hat{D}_m$. Let $uv\in \hat{D}_{m-1}$, where $u\in D_{m-1}$, so $v\in D_{m-2}$. Assume $u$ is a pendant vertex. By definition, $l(uv) = x$, and so
\begin{align*}
    xf(uv) & = x(-1)^{k-m+1+1}\sum_{i=1}^{k-m+1+1}(-1)^i|D^u_{k-i+1}| \\
           & = x(-1)^{k-m}(-|D^u_k|+...+(-1)^{k-m+2}|D^u_{m-1}|) \\
           & = x(-1)^{k-m}(0 + (-1)^{k-m}) \\
           & = x \\
           & = l(uv) \mod{h}.
\end{align*}
We used the fact that, since $u$ is a pendant vertex, $|D^u_i|=0$ for all $i>m-1$.

Assume then that $u$ is not a pendant vertex. Let $D^u_m = \{w_1, w_2, ..., w_{\alpha} \}$. Since $l$ is an $h$-magic labeling,
$$
l^+(u) = l(uv) + \sum_{i=1}^{\alpha}l(uw_i) \equiv x \mod{h}.
$$
But $uw_i \in \hat{D}_m$ for each $1 \leq i \leq \alpha$, so we can apply the induction hypothesis and obtain
\begin{align*}
    \sum_{i=1}^{\alpha}l(uw_i) & \equiv \sum_{i=1}^{\alpha} xf(uw_i) \\
    & = \sum_{i=1}^{\alpha}x(-1)^{k-m+1}\sum_{j=1}^{k-m+1}(-1)^j|D^{w_i}_{k-j+1}| \\
    & = x(-1)^{k-m+1}\sum_{i=1}^{\alpha}\sum_{j=1}^{k-m+1}(-1)^j|D^{w_i}_{k-j+1}| \\
    & = x(-1)^{k-m+1}\sum_{j=1}^{k-m+1}(-1)^j|D^u_{k-j+1}| \mod{h}.
\end{align*}
Note that we used Corollary (\ref{lem1}) in the last step. In the future, we will use it without reference. From the above fact it follows that
\begin{align*}
    l(uv) & \equiv x - \sum_{i=1}^{\alpha}l(uw_i) \\
          & \equiv x - x(-1)^{k-m+1}\sum_{j=1}^{k-m+1}(-1)^j|D^u_{k-j+1}| \\
          & = x(1 + (-1)^{k-m+2}\sum_{j=1}^{k-m+1}(-1)^j|D^u_{k-j+1}|) \\
          & = x((-1)^{2(k-m+2)}|D^u_{m-1}|+(-1)^{k-m+2}\sum_{j=1}^{k-m+1}(-1)^j|D^u_{k-j+1}|) \\
          & = x(-1)^{k-m+2} ((-1)^{k-m+2}|D^u_{m-1}|+\sum_{j=1}^{k-m+1}(-1)^j|D^u_{k-j+1}|) \\
          & = x(-1)^{k-m+2}\sum_{j=1}^{k-m+2}(-1)^j|D^u_{k-j+1}|)\\
          & = xf(uv) \mod{h}.
\end{align*}
Thus, by the principal of backwards induction, the statement is true for all $m$ such that $1 \leq m \leq k$. And so, the statement is true for all edges in trees of even diameter. 

Note that this proof does not depend on the fact that the edges and vertices are in a tree of even diameter. Since $f$ is the same for both odd and even diameters with the exception of $c_1c_2$, the same proof suffices for all edges in an odd diameter tree except for $c_1c_2$, which we now prove separately.

Let $D^{c_1}_1 = \{w_1, w_2, ..., w_{\alpha} \}$. Since $l$ is an $h$-magic labeling, 
$$
    l^+(c_1) = l(c_1c_2) + \sum_{i=1}^{\alpha}l(c_1w_i) \equiv x \mod{h},
$$
and since $c_1w_i \in \hat{D}_1$ for every $1\leq i \leq \alpha$, then $l(c_1w_i) \equiv xf(c_1w_i) \mod{h}$. Thus, 
\begin{align*}
    \sum_{i=1}^{\alpha}l(c_1w_i) & \equiv \sum_{i=1}^{\alpha} xf(c_1w_i) \\
    & =  \sum_{i=1}^{\alpha}x(-1)^k\sum_{j=1}^{k}(-1)^j|D^{w_i}_{k-j+1}| \\
                                 & = x(-1)^{k}\sum_{j=1}^k(-1)^j|D^{c_1}_{k-j+1}| \mod{h}.
\end{align*}
From this, it follows that
\begin{align*}
    l(c_1c_2) & \equiv x - \sum_{i=1}^{\alpha}l(c_1w_i) \\
              & \equiv x(1 + (-1)^{k+1}\sum_{j=1}^k(-1)^j|D^{c_1}_{k-j+1}|) \\
              & = x((-1)^{2(k+1)}|D^{c_1}_0| + (-1)^{k+1}\sum_{j=1}^k(-1)^j|D^{c_1}_{k-j+1}|) \\
              & = x(-1)^{k+1}((-1)^{k+1}|D^{c_1}_0| + \sum_{j=1}^k(-1)^j|D^{c_1}_{k-j+1}|) \\
              & = x(-1)^{k+1}\sum_{j=1}^{k+1}(-1)^j|D^{c_1}_{k-j+1}| \\
              & = xf(c_1c_2) \mod{h }.
\end{align*}

Thus, the lemma is true for all edges of an odd diameter tree, and so the proof is done.
\end{proof}

This result is significant because it allows us to describe any $h$-magic labeling of a tree $T_n$. In particular, it tells us that any such labeling is dictated entirely by $x$, the label of the pendant edges. Thus, determining if a tree $T_n$ is $h$-magic becomes a matter of determining if we can find an $x$ that satisfies the two conditions that $f$ itself does not guarantee. These conditions, again, are that no edge has the label 0 and that $l^+$ is also constant for the center vertices. It turns out that being able to find such an $x$ characterizes $h$-magic graphs.

\begin{theorem} \label{main}
$T_n$ is $h$-magic if and only if there exists an integer $x$ with the following two properties. 

\begin{enumerate}
    \item For every edge $e\in E(T_n)$, 
    \begin{equation}
    xf(e) \not\equiv 0 \mod{h}.    \label{eqn1}
    \end{equation}
    \item  \begin{enumerate}
            \item If $n$ is even, then
            \begin{equation}
            x\sum_{i=1}^{k}(-1)^{k+i}|D_{k-i+1}|   \equiv x \mod{h}    \label{eqn2}
            \end{equation}
            \item If $n$ is odd, then 
            \begin{equation}
            x\sum_{i=1}^{k}(-1)^i|D^{c_1}_{k-i+1}| \equiv x\sum_{i=1}^{k}(-1)^i|D^{c_2}_{k-i+1}| \mod{h}.    \label{eqn3}
            \end{equation}
           \end{enumerate}
\end{enumerate}
\end{theorem}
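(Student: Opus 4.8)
The plan is to prove both directions by leveraging Lemma~\ref{mainlem}, which reduces the entire question to the behavior of the labeling $l = xf$ for a suitable integer $x$.

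For the forward direction, suppose $T_n$ is $h$-magic with magic labeling $l$ and pendant-edge label $x$. By Lemma~\ref{mainlem}, $l(e) \equiv xf(e) \pmod h$ for every edge $e$. Since a labeling must avoid $0$, we get property~1 immediately: $xf(e) \equiv l(e) \not\equiv 0 \pmod h$ for every $e$. For property~2, I would examine the magic condition at the center. In the even case, $l^+(c) \equiv x \pmod h$; writing $c \in D_0$ and $D^c_1 = \{w_1,\dots,w_\alpha\} = D_1$, we have $l^+(c) = \sum_i l(cw_i) \equiv x\sum_i f(cw_i) \pmod h$. Expanding $f(cw_i)$ by its definition (here $m=1$) and applying Corollary~\ref{lem1} exactly as in the proof of Lemma~\ref{mainlem} collapses $\sum_i \sum_j (-1)^j |D^{w_i}_{k-j+1}|$ into $\sum_j (-1)^j |D^c_{k-j+1}| = \sum_j(-1)^j|D_{k-j+1}|$ (using $D^c_i = D_i$). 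Setting this equal to $x$ and rearranging the sign conventions yields~(\ref{eqn2}). In the odd case, the condition $l^+(c_1) \equiv x$ is already guaranteed by the construction of $f(c_1c_2)$ (this is how $f(c_1c_2)$ was defined), so the only new constraint comes from $l^+(c_2) \equiv x$. Writing out $l^+(c_2) = l(c_1c_2) + \sum_i l(c_2 w_i)$ over $D^{c_2}_1$, substituting $l \equiv xf$, and also using $l^+(c_1) = l(c_1c_2) + \sum_i l(c_1 w_i) \equiv x$, subtract the two equations; the $l(c_1c_2)$ term cancels and, after applying Corollary~\ref{lem1} to each side and simplifying signs, one is left with~(\ref{eqn3}).

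For the converse, suppose such an $x$ exists. Define $l : E(T_n) \to \mathbb{Z}_h$ by $l(e) \equiv xf(e) \pmod h$. Property~1 guarantees $l$ never takes the value $0$, so $l$ is a genuine labeling. It remains to check $l^+(v) \equiv x \pmod h$ for every vertex $v$. For non-center vertices I would argue by cases mirroring Lemma~\ref{mainlem}: if $v \in D_m$ is a pendant vertex, $l^+(v) = l(e) = xf(e) \equiv x$ by the base-case computation; if $v \in D_m$ is internal with $m \geq 1$, then $v$ has one edge $vu$ toward the center ($u \in D_{m-1}$) and edges $vw_1,\dots,vw_\alpha$ to $D^v_{m+1}$, and $l^+(v) = xf(vu) + x\sum_i f(vw_i)$; expanding $f(vu)$ via its definition and $\sum_i f(vw_i)$ via Corollary~\ref{lem1}, the two pieces telescope to exactly $x$ — this is essentially the algebraic identity already verified inside the proof of Lemma~\ref{mainlem}, just read in the forward direction. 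Finally, for the center: in the even case $l^+(c) = x\sum_i f(cw_i)$ which, by the same collapse as above, equals $x\sum_{i}(-1)^{k+i}|D_{k-i+1}| \equiv x$ precisely by~(\ref{eqn2}); in the odd case, $f(c_1c_2)$ was defined so that $l^+(c_1) \equiv x$ automatically, and $l^+(c_2) \equiv x$ is equivalent, after the cancellation argument, to~(\ref{eqn3}). Hence $l$ is an $h$-magic labeling.

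The main obstacle is bookkeeping rather than conceptual: correctly matching up the index shifts and the powers of $-1$ in the definition of $f$ when one edge of a vertex points inward and the rest point outward, and making sure the telescoping at an internal vertex and the two competing conditions at the center (even vs.\ odd) are handled with the right sign. All the real content — the collapse of $\sum_i |D^{w_i}_d|$ into $|D^u_d|$ — is already packaged in Corollary~\ref{lem1}, and the forward-direction identities are exactly the computations performed in Lemma~\ref{mainlem}, so the proof should largely be an assembly of pieces already in hand, with care taken only that the center conditions~(\ref{eqn2}) and~(\ref{eqn3}) are stated in a form that exactly matches the leftover constraint after everything else is forced.
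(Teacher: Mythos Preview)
Your proposal is correct and follows essentially the same approach as the paper: both directions hinge on Lemma~\ref{mainlem}, with Corollary~\ref{lem1} collapsing the sums, and the converse is handled by defining $l = xf$ and case-splitting on $D_k$, internal $D_m$, and the center. Your subtraction trick for the odd-diameter center (cancelling $l(c_1c_2)$ by subtracting the equations at $c_1$ and $c_2$) is a slight streamlining over the paper's direct expansion of $f(c_1c_2)$, but the argument is otherwise identical.
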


Before the proof, we note that the first property corresponds to not allowing the edges to have the label 0, and that the second property corresponds to having the center vertices constant under $l^+$. In particular, the left hand side of (\ref{eqn2}) will correspond to $l^+(c)$. Similarly, the left hand side of (\ref{eqn3}) will correspond to $l^+(c_1)$ and the right side to $l^+(c_2)$. Since $f$ by definition guarantees that $l^+(c_1) = x$, equation (\ref{eqn3}) will suffice to show that $l^+$ is constant.

\begin{proof}
Assume $T_n$ is $h$-magic and let $l$ be an $h$-magic labeling. We will demonstrate that if $x$ is the label of the pendant edges under $l$, then $x$ has both properties. We prove that property 1 holds by contradiction. Assume there exists an edge $e$ such that $xf(e) \equiv 0 \mod{h}$. By Lemma \ref{mainlem}, $l(e) \equiv 0 \mod{h}$ too, which contradicts the fact that $l$ is an $h$-magic labeling. Thus, $x$ has property 1.

For property 2, we start with even diameters. Assume $n=2k$ and set $D_1 = \{w_1, w_2, ..., w_{\alpha}\}$. By using Lemma \ref{mainlem}, we get the result:
\begin{eqnarray*}
    x & \equiv & l^+(c) \\
      & = & \sum_{i=1}^{\alpha}l(cw_i) \\
      & \equiv & \sum_{i=1}^{\alpha}xf(cw_i) \\
      & = & x \sum_{i=1}^{\alpha}(-1)^k\sum_{j=1}^k(-1)^j|D^{w_i}_{k-j+1}| \\
      & = & x \sum_{j=1}^{k}(-1)^{k+j}|D_{k-j+1}| \mod{h}.
\end{eqnarray*}

To prove the odd case, we let $n=2k+1$ and set $D^{c_2}_1 = \{w_1, w_2, ..., w_{\alpha} \}$. As before, Lemma \ref{mainlem} gives us the result:

\begin{align*}
    x & \equiv l^+(c_2) \\
      & = l(c_1c_2) + \sum_{i=1}^{\alpha}l(c_2w_i) \\
      & \equiv x(-1)^{k+1}\sum_{i=1}^{k+1}(-1)^i|D^{c_1}_{k-i+1}| + \sum_{i=1}^{\alpha}x(-1)^k\sum_{j=1}^k(-1)^j|D^{w_i}_{k-j+1}| \\
      & = x(-1)^k(-\sum_{i=1}^{k+1}(-1)^i|D^{c_1}_{k-i+1}| + \sum_{j=1}^k(-1)^j|D^{c_2}_{k-j+1}|) \\
      & = x(-1)^k(-\sum_{i=1}^k(-1)^i|D^{c_1}_{k-i+1}| + \sum_{j=1}^k(-1)^j|D^{c_2}_{k-j+1}| + (-1)^k) \\
      & = x(-1)^k(-\sum_{i=1}^k(-1)^i|D^{c_1}_{k-i+1}| + \sum_{j=1}^k(-1)^j|D^{c_2}_{k-j+1}|) + x \mod{h}.
\end{align*}
\noindent This is equivalent to (\ref{eqn3}), so $x$ has property 2, and so the forward direction is done.

For the other direction, we first deal with trees of even diameter. Assume that $n=2k$ and that there exists an integer $x$ such that (\ref{eqn1}) and (\ref{eqn2}) are true. Define $l(e) = xf(e)$. We claim that $l$ is an $h$-magic labeling, i.e. that $l^+(u) \equiv x \mod{h}$ for every $u\in V(T_n)$, so let $u\in V(T_n)$. We split the proof into three cases.

Assume $u\in D_k$. If $v$ is the unique vertex adjacent to $u$, then,
\begin{align*}
    l^+(u) & =  l(uv) \\
          & = xf(uv) \\
          & = -x(-|D^u_k|) \\
          & = x.
\end{align*}

Assume $u\in D_0$, i.e. $u = c$. If we let $D_1^u = \{w_1, w_2, ..., w_{\alpha}\}$, then,
\begin{align*}
    l^+(u) & = \sum_{i=1}^{\alpha}l(uw_i) \\
           & = \sum_{i=1}^{\alpha}xf(uw_i) \\
           & = x \sum_{i=1}^{\alpha}(-1)^k\sum_{j=1}^k(-1)^j|D^{w_i}_{k-j+1}| \\
           & = x \sum_{j=1}^{k}(-1)^{k+j}|D_{k-j+1}| \\
           & \equiv x \mod{h}.
\end{align*}

Finally, assume $u\in D_m$ where $0<m<k$. The proof of Lemma \ref{mainlem} demonstrates that $f(e) = 1$ for every pendant edge. Thus, if $u$ is a pendant vertex, then $l^+(u) = x$. So assume $u$ is not a pendant vertex. Set $v\in D_{m-1}$ such that $uv \in E(T_{2k})$, and $D^u_{m+1} = \{w_1, w_2, ..., w_{\alpha} \}$. We obtain the result from the following.
\begin{align*}
    l^+(u) & = l(uv) + \sum_{i=1}^{\alpha}l(uw_i)\\
           & = x(-1)^{k-m+1}\sum_{i=1}^{k-m+1}(-1)^i|D^u_{k-i+1}| + x\sum_{i=1}^{\alpha}(-1)^{k-m}\sum_{j=1}^{k-m}(-1)^j|D^{w_i}_{k-j+1}| \\
           & = x(-1)^{k-m}(-\sum_{i=1}^{k-m+1}(-1)^i|D^u_{k-i+1}| + \sum_{j=1}^{k-m}(-1)^j|D^u_{k-j+1}|) \\
           & = x(-1)^{k-m}(-(-1)^{k-m+1}|D^u_m|) \\
           & = x.
\end{align*}

This shows that $l$ is an $h$-magic labeling, thus completing the proof for even diameter trees. As for trees with odd  diameter, let $n=2k+1$ and assume there exists an $x$ such that (\ref{eqn1}) and (\ref{eqn3}) hold. As with the even case, define $l(e) = xf(e)$. The proof for $u\in D_i$ for $i>0$ is the same as in the even case. We deal with $D_0= \{c_1, c_2 \}$ separately. Set $D^{c_1}_1= \{w_1, w_2, ..., w_{\alpha} \}$. It follows that
\begin{align*}
    l^+(c_1) & = l(c_1c_2) + \sum_{i=1}^{\alpha}l(c_1w_i) \\
             & = x(-1)^{k+1}\sum_{i=1}^{k+1}(-1)^i|D^{c_1}_{k-i+1}| + \sum_{i=1}^{\alpha}x(-1)^k\sum_{j=1}^{k}(-1)^j|D^{w_i}_{k-j+1}| \\
             & = x(-1)^k(-\sum_{i=1}^{k+1}(-1)^i|D^{c_1}_{k-i+1}|+\sum_{j=1}^{k}(-1)^j|D^{c_1}_{k-j+1}|) \\
             & = x(-1)^k(-(-1)^{k+1}|D^{c_1}_{0}|) \\
             & = x.
\end{align*}

Similarly, if $D^{c_2}_1 = \{w_1, w_2, ..., w_{\alpha} \}$, then

\begin{align*}
    l^+(c_2) & = l(c_1c_2) + \sum_{i=1}^{\alpha}l(c_2w_i) \\
             & = x(-1)^{k+1}\sum_{i=1}^{k+1}(-1)^i|D^{c_1}_{k-i+1}| + \sum_{i=1}^{\alpha}x(-1)^k\sum_{j=1}^{k}(-1)^j|D^{w_i}_{k-j+1}| \\
             & = x(-1)^k(-\sum_{i=1}^{k+1}(-1)^i|D^{c_1}_{k-i+1}|+\sum_{j=1}^{k}(-1)^j|D^{c_2}_{k-j+1}|) \\
             & \equiv x(-1)^k(-\sum_{i=1}^{k+1}(-1)^i|D^{c_1}_{k-i+1}|+\sum_{j=1}^{k}(-1)^j|D^{c_1}_{k-j+1}|) \\
             & = x(-1)^k(-(-1)^{k+1}|D^{c_1}_{0}|) \\
             & = x.
\end{align*}
Thus, $l^+$ is constant for all vertices, and so $T_n$ is $h$-magic. This completes the odd case, and so the proof.
\end{proof}

Having a complete characterization of when $T_n$ is $h$-magic allows us to find the integer-magic spectrum rather easily. In fact, the proof of Salehi \textit{et al.} for trees of diameter 5 in \cite{diam4, kn:l-si-s} gives a template for the proof of our final result. Before going on to the main result, we provide a small and immediate corollary to Theorem \ref{main} that will shed light on the conditions stated in the main result.

\begin{corollary} \label{cor2}
For any $h$-magic tree of diameter $2k$, if $x$ is the label of the pendant edges, then
$$
x\sum_{i=1}^{k+1}(-1)^{k+i}|D_{k-i+1}| \equiv 0 \mod{h}.
$$
Furthermore, for any $h$-magic tree of diameter $2k+1$, if $x$ is the label of the pendant edges, then
$$
x\sum_{i=1}^{k}(-1)^i(|D_{k-i+1}^{c_1}| - |D_{k-i+1}^{c_2}|) \equiv 0 \mod{h}.
$$
\end{corollary}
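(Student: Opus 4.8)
The plan is to derive both congruences directly from Theorem~\ref{main}, specifically from property~2, by a short algebraic manipulation together with the observation that $|D_0|$ is a known small constant. Throughout, assume $T_n$ is $h$-magic and let $x$ be the label of its pendant edges; by Theorem~\ref{main} such an $x$ exists and satisfies the relevant equation from property~2.

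For the even case, $n = 2k$, I would start from (\ref{eqn2}), namely $x\sum_{i=1}^{k}(-1)^{k+i}|D_{k-i+1}| \equiv x \bmod h$, and move the $x$ on the right to the left side to get $x\sum_{i=1}^{k}(-1)^{k+i}|D_{k-i+1}| - x \equiv 0 \bmod h$. The idea is then to absorb the lone $-x$ into the sum as its $i=k+1$ term. Since $(-1)^{k+(k+1)} = (-1)^{2k+1} = -1$ and $|D_{k-(k+1)+1}| = |D_0| = 1$ for an even-diameter tree (as noted in Section~2, $D_0 = \{c\}$), the $i = k+1$ term of $\sum_{i=1}^{k+1}(-1)^{k+i}|D_{k-i+1}|$ is exactly $(-1)^{2k+1}\cdot 1 = -1$, which accounts precisely for the $-x$. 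Hence $x\sum_{i=1}^{k+1}(-1)^{k+i}|D_{k-i+1}| \equiv 0 \bmod h$, as claimed.

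For the odd case, $n = 2k+1$, I would start from (\ref{eqn3}): $x\sum_{i=1}^{k}(-1)^i|D^{c_1}_{k-i+1}| \equiv x\sum_{i=1}^{k}(-1)^i|D^{c_2}_{k-i+1}| \bmod h$. Subtracting the right side from the left and combining the sums termwise gives $x\sum_{i=1}^{k}(-1)^i\bigl(|D^{c_1}_{k-i+1}| - |D^{c_2}_{k-i+1}|\bigr) \equiv 0 \bmod h$, which is exactly the stated congruence; here the two sums have the same index range, so no boundary-term bookkeeping is needed and the manipulation is immediate.

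The only point requiring care — and the one place where something could go wrong — is the even case, where one must correctly identify the sign and the value of the $i=k+1$ term and verify it equals $-1$ so that it cancels against the $-x$; getting the parity $(-1)^{2k+1} = -1$ and the fact $|D_0| = 1$ right is the whole content of that step. Given the setup in Section~2, neither is an obstacle, so the corollary follows essentially by rewriting Theorem~\ref{main}'s property~2.
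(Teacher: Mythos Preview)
Your proposal is correct and matches the paper's intent exactly: the paper presents Corollary~\ref{cor2} as a ``small and immediate corollary to Theorem~\ref{main}'' without further proof, and what you have written is precisely the one-line algebraic rewriting of property~2 (absorbing $-x$ as the $i=k+1$ term via $|D_0|=1$ in the even case, and subtracting the two sides of (\ref{eqn3}) in the odd case) that the paper has in mind.
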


\section{The Main Result}

The integer-magic spectrum of an arbitrary tree is provided by the following general result.

\begin{theorem} \label{mainmain}
Given a tree $T_n$, let $k = \floor{\frac{n}{2}},$ and let
$$
C := \{m \in \mathbb{Z} \text{ : } m|r \text{ for some } r\in \text{Range}(f)\}.
$$
When $n$ is even, set
$$
\sigma := \sum_{i=1}^{k+1}(-1)^{k+i}|D_{k-i+1}|,
$$
and when $n$ is odd, set
$$
\sigma := \sum_{i=1}^{k}(-1)^i(|D_{k-i+1}^{c_1}| - |D_{k-i+1}^{c_2}|).
$$
If $D$ is the set of all positive divisors of $\sigma$ that are not in $C$, then
\begin{align*}
IM(T_n) = \left\{ \begin{array}{cc}
                \emptyset & \hspace{5mm} \text{if }\sigma \in C; \\
                \mathbb{N}-C & \hspace{5mm} \text{if }\sigma=0; \\
                \bigcup_{d\in D} d\mathbb{N} & \hspace{5mm} \text{otherwise.} \\
                \end{array} \right.
\end{align*}

\end{theorem}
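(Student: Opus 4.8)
The plan is to translate Theorem~\ref{main} into a statement purely about the existence of a suitable $x$, and then to solve the resulting system of congruences. By Theorem~\ref{main}, $h \in IM(T_n)$ if and only if there is an integer $x$ satisfying (i) $xf(e) \not\equiv 0 \pmod h$ for every edge $e$, and (ii) the center condition, which in both parities can be written as $x(\sigma' ) \equiv 0 \pmod h$ for an appropriate integer: for even $n$, Corollary~\ref{cor2} rewrites (\ref{eqn2}) as $x\sigma \equiv 0$ with the $\sigma$ of the statement; for odd $n$, (\ref{eqn3}) is exactly $x\sigma \equiv 0$ with the odd-diameter $\sigma$. So the task reduces to: for which $h$ does there exist $x$ with $x\sigma \equiv 0 \pmod h$ and $xr \not\equiv 0 \pmod h$ for every $r \in \mathrm{Range}(f)$?

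First I would record the elementary number-theoretic fact that, for fixed $h$, the condition $xr \not\equiv 0 \pmod h$ for all $r$ in a finite set $R$ is equivalent to $\gcd(x,h)$ being coprime to no\dots{}more precisely, that $xr \equiv 0 \pmod h$ holds iff $\frac{h}{\gcd(h,r)} \mid x$, so $x$ avoids all these bad congruences iff for no $r\in R$ does $\frac{h}{\gcd(h,r)}$ divide $x$. The cleanest route is to test $x=1$ first: $x=1$ works for condition (i) exactly when $h \nmid r$ for every $r \in \mathrm{Range}(f)$, i.e.\ exactly when $h \notin C$. Then I would argue that if some $x$ works at all, one may take $\gcd(x,h)=g$ and reduce to $x=g$, and that $x=g$ satisfies (i) iff $h/g$ divides no element of $\mathrm{Range}(f)$, i.e.\ iff $h/g \notin C$; meanwhile (ii) becomes $g\sigma \equiv 0 \pmod h$, i.e.\ $\frac{h}{\gcd(h,\sigma)} \mid g$. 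Combining, $h \in IM(T_n)$ iff there is a divisor $g$ of $h$ with $h/g \notin C$ and $\frac{h}{\gcd(h,\sigma)} \mid g$.

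Now I would do the case analysis exactly as in the statement. If $\sigma \in C$: one shows (i) and (ii) are incompatible for every $x$ and every $h$ — condition (ii) forces the ``$\sigma$-part'' of $h$ onto $x$, and since $\sigma \in C$ there is $r\in\mathrm{Range}(f)$ with $\sigma \mid r$, which can be leveraged to force $xr\equiv 0$; hence $IM(T_n)=\emptyset$. (This is the step I expect to be the main obstacle: making precise why $\sigma \mid r$ for some $r$ sabotages \emph{every} choice of $x$, as opposed to just $x=\sigma$, presumably via a divisibility/valuation argument prime by prime, noting $1 \in \mathrm{Range}(f)$ as well from the base case of Lemma~\ref{mainlem}.) If $\sigma = 0$: condition (ii) is automatic, so $h\in IM(T_n)$ iff $x=1$ is admissible for (i), which by the above is exactly $h \notin C$; hence $IM(T_n)=\mathbb{N}-C$. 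If $\sigma \neq 0$ and $\sigma\notin C$: here $h\in IM(T_n)$ iff $h$ has a divisor $d \mid \sigma$ with $d \notin C$ such that $d\mid h$ — concretely, writing $h = d m$, one checks $x = d$ or a suitable multiple witnesses both conditions, so $h\in IM(T_n)$ iff $d\mid h$ for some $d \in D$, i.e.\ $IM(T_n)=\bigcup_{d\in D} d\mathbb{N}$. I would close by verifying the two directions of this last equivalence: if $d\in D$ and $d\mid h$, take $x=d$ (then (ii) holds since $d\mid\sigma$, and (i) holds since $d\notin C$ means $d$ divides no bad residue... wait — need $h/\gcd(h,r)\nmid d$); conversely, any admissible $x$ yields, after dividing by $\gcd$, such a divisor $d$. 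These verifications are the routine calculations I would not grind through here, but they follow the template of Salehi \emph{et al.} for diameter $5$.
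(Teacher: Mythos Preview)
Your approach is the same as the paper's: reduce via Theorem~\ref{main} and Corollary~\ref{cor2} to the pair of conditions $x\sigma\equiv 0\pmod h$ and $xf(e)\not\equiv 0\pmod h$ for all $e$, then do the three-way case split. Your reduction to $g=\gcd(x,h)$ is valid and in fact makes the whole argument more uniform than the paper's. But two points in your case analysis need correction.

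First, the case $\sigma\in C$ is not the obstacle you fear; no prime-by-prime valuation is needed. It is one line: if $\sigma x\equiv 0\pmod h$ and $\sigma\mid f(e)$ for some edge $e$, then $xf(e)=\frac{f(e)}{\sigma}\cdot(\sigma x)\equiv 0\pmod h$, contradicting property~1. (Equivalently, in your own framework: $\tfrac{h}{g}\mid\sigma$ and $\sigma\mid f(e)$ force $\tfrac{h}{g}\in C$.) Second, in the final case your proposed witness $x=d$ is wrong. With $d\in D$ and $h=dq$, taking $x=d$ gives $d\sigma\equiv 0\pmod h$ only when $\tfrac{h}{d}\mid\sigma$, which need not hold (e.g.\ $h=12$, $d=\sigma=2$). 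The correct witness---and the one the paper uses---is $x=\tfrac{h}{d}$: then $x\sigma=\tfrac{\sigma}{d}\cdot h\equiv 0$ since $d\mid\sigma$, and $xf(e)=\tfrac{h}{d}f(e)\not\equiv 0$ since $d\nmid f(e)$. Your own ``wait'' was warranted; in your $g$-language the point is that the divisor of $\sigma$ you want is $d=h/g$, not $g$ itself. For the converse in that case the paper takes $d=\gcd(\sigma,h)$ and shows $d\notin C$ by exactly the divisibility chain you outlined, so once you fix the witness the rest is routine.
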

\begin{proof}
We start by observing, from Corollary \ref{cor2}, that if $x$ is the label of the pendant edges in an $h$-magic labeling, then $\sigma x \equiv 0 \mod{h}$. Similarly, any integer $x$ with the property that $\sigma x \equiv 0 \mod{h}$ also has property 2 in Theorem \ref{main}. We now proceed to the cases. 

Assume $\sigma \in C$, and for a contradiction, assume there is an $h$ such that $T_n$ is $h$-magic. Let $l$ be an $h$-magic labeling and $x$ be the label of the pendant edges. We then have that $\sigma x \equiv 0 \mod{h}$. Since $\sigma \in C$, there exist an edge $e$ such that $\sigma | f(e)$, and so $\sigma x | xf(e)$. But then, $xf(e) \equiv 0 \mod{h}$, so $l(e) \equiv 0 \mod{h}$ by Lemma \ref{mainlem}, which is a contradiction. We conclude $IM (T_n) = \emptyset$.

Assume then that $\sigma = 0$. Let $h\in \mathbb{N} - C$. We will prove that $T_n$ is $h$-magic by showing that $x=1$ satisfies both properties in Theorem \ref{main}. From the definition of $C$, we have that $h\nmid f(e)$ for any edge $e$. In other words, $f(e)\not \equiv 0 \mod{h}$, so property 1 is satisfied. Furthermore, $\sigma = 0$ implies property 2. Thus, $T_n$ is $h$-magic, so $h\in IM(T_n).$ For the other direction, assume $h\in IM(T_n)$. Let $l$ be an $h$-magic labeling and $x$ be the label of the pendant edges. Assume for a contradiction that $h|f(e)$ for some edge $e$. It follows that $xf(e) \equiv 0 \mod{h}$, so $l(e) \equiv 0 \mod{h}$ by Lemma \ref{mainlem}, which is a contradiction. We conclude $h \nmid f(e)$ for all edges, so $h\in \mathbb{N}-C$. This proves that $IM(T_n) = \mathbb{N}-C$.

Finally, assume that $\sigma \not\in C$ and $\sigma \not= 0$. We have to prove that $IM(T_n) = \bigcup_{d\in D}d\mathbb{N}$. Assume first that $h\in IM(T_n)$. Let $l$ be an $h$-magic labeling and $x$ be the label of the pendant edges. Set $d= gcd(\sigma, h)$. We claim that $d\in D$. By definition, $d|\sigma$, so we only need to prove that $d\not \in C$. We do so by contradiction, so assume $d|f(e)$ for some $e\in E(T_n)$. From $\sigma x \equiv 0 \mod{h}$, we know $h|\sigma x$. But since $d=gcd(\sigma, h)$, then $h|dx$ too, and since $d|h$, it follows that $\frac{h}{d}|x$. We combine this with $d|f(e)$ to get that $\frac{h}{d}d |xf(e)$, or $xf(e) \equiv 0 \mod{h}$, which is a contradiction with Lemma \ref{mainlem}. Thus, $d\not \in C$, so $d\in D$. And since $d|h$, there exists a number $p$ such that $dp=h$, i.e. $h\in \bigcup_{d\in D}d\mathbb{N}$. 

We prove the other direction. Let $h\in \bigcup_{d\in D}d\mathbb{N}$. By definition, $h$ is a multiple of $d\in D$ so set $h = dq$ where $q \in \mathbb{N}$. We will demonstrate that $q = \frac{h}{d}$ is a choice of $x$ that satisfies both properties in Theorem \ref{main}. From the definition of $D$, $d\not = 1$, so $\frac{h}{d} \not\equiv 0 \mod{h}$. Since $d\not\in C$, $d \nmid f(e)$ for all edges $e$, or $(\frac{h}{d})d \nmid \frac{h}{d}f(e)$. We conclude that $\frac{h}{d}f(e) \not\equiv 0 \mod{h}$ for all edges, so property 1 is satisfied. By definition, $d | \sigma$, so $\frac{\sigma}{d}h \equiv 0 \mod{h}$. Thus, property 2 is satisfied. By Theorem \ref{main}, $T_n$ is $h$-magic, and so $h\in IM(T_n)$. In other words, $IM(T_n) = \bigcup_{d\in D}d\mathbb{N}$, and the proof is done.
\end{proof}

%\section{Acknowledgements}

%We want to give a special thanks to our professor Dr. Ebrahim Salehi. His support during this project were key to our success. We also want to thank Dr. Michelle Robinette for her helpful comments. This research did not receive any specific grant from funding agencies in the public, commercial, or not-for-profit sectors.

\end{document}